\UseRawInputEncoding
\documentclass[12pt,draft,reqno]{amsart}

\usepackage{array,latexsym}
\usepackage{amsmath,amssymb,amscd,amsfonts,amsthm}

\usepackage{paralist}

\usepackage{mathrsfs}
\usepackage{dsfont}
\usepackage{bbold}
\usepackage{mathbbol}
\usepackage[all]{xy}
\usepackage{enumerate}

\frenchspacing

%%%%%%%%%%%%%%%%%%%%%%%%%%%%%%%

\makeatletter
\renewcommand*\env@matrix[1][\arraystretch]{%
  \edef\arraystretch{#1}%
  \hskip -\arraycolsep
  \let\@ifnextchar\new@ifnextchar
  \array{*\c@MaxMatrixCols c}}
\makeatother

%%%%%%%%%%%%%%%%%%%%%%%%%%%%%%%

%for mnotes
{\catcode`\@=11
\gdef\n@te#1#2{\leavevmode\vadjust{%
 {\setbox\z@\hbox to\z@{\strut#1}%
  \setbox\z@\hbox{\raise\dp\strutbox\box\z@}\ht\z@=\z@\dp\z@=\z@%
  #2\box\z@}}}
\gdef\leftnote#1{\n@te{\hss#1\quad}{}}
\gdef\rightnote#1{\n@te{\quad\kern-\leftskip#1\hss}{\moveright\hsize}}
\gdef\?{\FN@\qumark}
\gdef\qumark{\ifx\next"\DN@"##1"{\leftnote{\rm##1}}\else
 \DN@{\leftnote{\rm??}}\fi{\rm??}\next@}}
%

%%%%%%%%%%%%%%%%%%%%%%%%%%%%%%%%%%%%%%%%%%%%%%%%%%%%%%%%%%%%%%%%%%%%%%%%%%%%%%%%%%%%%%%%%%%%%%%

%%%%%%%%%%%%%%%%%%%%%%%%%%%%%%%%%%%%%%%%%%%%%%%%
%%\renewcommand{\baselinestretch}{1.1}
%%   \renewcommand{\footnoterule}{}
%%%%%%%%%%%%%%%%%%%%%%%%%%%%%%%%%%%%%%%%%%%%%%%%

%%%%%%%%%%%%%%%%%Cyrillic characters

\DeclareOption{loadcyr}{\cyr@true}

\DeclareFontFamily{OT1}{wncyr}{\hyphenchar\font45
}
\DeclareFontShape{OT1}{wncyr}{m}{n}{%
   <5> <6> <7> <8> <9> gen * wncyr
   <10> <10.95> <12> <14.4> <17.28> <20.74>  <24.88>wncyr10}{}
\DeclareFontShape{OT1}{wncyr}{m}{it}{%
   <5> <6> <7> <8> <9> gen * wncyi
   <10> <10.95> <12> <14.4> <17.28> <20.74> <24.88> wncyi10}{}
\DeclareFontShape{OT1}{wncyr}{m}{sc}{%
   <5> <6> <7> <8> <9> <10> <10.95> <12> <14.4>
   <17.28> <20.74> <24.88>wncysc10}{}
\DeclareFontShape{OT1}{wncyr}{b}{n}{%
   <5> <6> <7> <8> <9> gen * wncyb
   <10> <10.95> <12> <14.4> <17.28> <20.74> <24.88>wncyb10}{}
\input cyracc.def

%%\DeclareMathSizes{8}{8}{6}{4} % This only is different.
\DeclareMathSizes{7}{7}{5}{3}

%%%%%%%%%%%%%%%%%%%%%%%%%%%%%%

%%\swapnumbers

\theoremstyle{plain}

\newtheorem{theorem}{Theorem}

\newtheorem{lemma}{Lemma}

\theoremstyle{definition}

\newtheorem{definition}{Definition}

\newtheorem{nothing*}[theorem]{}
\newtheorem{subnothing*}[sub]{}
\newtheorem{example}{Example}%%[theorem]{Example}

\theoremstyle{remark}

\newtheorem{remark}{Remark}

%%%%%%%%%%%%%%%%%%%%%%%%%%%%%%%%%%%%%%%%%%%%%%%%%%%%%%%%%%%%%%%%%%%%

%%%%%%%%%%%%%%%%%%%%%%%%%%%%%%%%%%%%%%%%

%%%%%%%%%%%%%%%%%%%%%%%%%%%%%%%%%%%%%%%%

%%%%%%%%%%%%%%%%%%%%%%%%%%%%%%%%%%%%%

\newcommand{\cc}{\raise .4pt \hbox{{$\scriptstyle{\bullet}$}}}

\setcounter{secnumdepth}{0}

\begin{document}

\title[Rationality of  adjoint orbits]{Rationality of  adjoint orbits}

\author[Vladimir  L. Popov]{Vladimir  L. Popov${}^*$}
\thanks{${}^*$\,Steklov Mathematical Institute,
Russian Academy of Sciences, Gub\-kina 8,
Moscow 119991, Russia. {\it E-mail address}: popovvl@mi-ras.ru}

%%\dedicatory

\begin{abstract}
We prove that every
 orbit of the adjoint representation of  any connected
reductive algebraic group $G$ is a rational algeb\-raic variety. For complex simply connected semisimple $G$, this implies rationality of
  affine Hamiltonian $G$-varieties (which we classify).
\end{abstract}

\maketitle

\section{\bf 1.\;Introduction}

Let $G$ be a connected affine algebraic group and let $H$ be its closed subgroup.
Whether the algebraic variety $G/H$ is rational is a well-known old problem closely related to the rationality problem of  inva\-riant fields of linear representations of algebraic groups (see
\cite[1.5]{Po94}, \cite[Thm.\,1, Cor.\,2]{Po13}).

As is shown  in \cite[p.\,298]{Po11},
 \cite [Thm.\;2]{Po13}, \cite[Rem. 1.5.9]{Po94}, for some $G$ and {\it finite} $H$,
 the variety $G/H$ is nonrational (and even not stably rational).
  However, the existence of nonratio\-nal varie\-ties
  $G/H$ with a {\it connected} group $H$ is still an
  intriguing open
  problem.

At the same time, for many pairs $(G, H)$ with connected group $H$ either rationality
or stable rationality of the variety $G/H$ is proved; for instance, $G/H$ is rational whenever $\dim(G/H)\leqslant 10$  (see \cite{CZ15}).

The conference talk \cite{Ba151}
served for me as an impetus
to explore rationality of
orbits of
the
adjoint representations of
connected reduc\-tive groups. Searching for some special rational
coordi\-nates on the adjoint orbits of
${\rm GL}_n(\mathbb C)$, ${\rm SO}_n(\mathbb C)$, ${\rm Sp }_n(\mathbb C)$,
the author of
\cite{Ba151}
proved, as a byproduct,
rationality of the majority of  these orbits.
He uses the ``method of the canonical orbit parameterization'' that
dates back to the work of I.\,M.\,Gelfand and M.\,I.\,Nai\-mark
on unitary representations of classical groups (1950). The para\-meterization of (co)\-adjoint orbits was of inte\-rest to many researchers because of
its connection with the problems of the
theory of
integr\-able systems
(see introduction
and related references
in \cite{Ba16}).

In the present paper, is proved the following theorem announced in \cite{Po16}, which yields infinitely many new examples of rational varieties of the form $G/H$.

\begin{theorem}\label{ratadj} Let $G$ be a connected reductive algebraic group.
Every
$G$-orbit of the adjoint representation of $G$
is a rational algebraic variety.
\end{theorem}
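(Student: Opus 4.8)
The plan is to reduce, in stages, to the rationality of nilpotent orbits, using the cell structure of flag varieties as the main engine. First I would dispose of the center: since rationality is birational and the adjoint action is trivial on $\mathfrak{z}(\mathfrak{g})$, writing $\mathfrak{g}=\mathfrak{z}(\mathfrak{g})\oplus[\mathfrak{g},\mathfrak{g}]$ shows that the orbit of $z+y$ (with $z$ central, $y\in[\mathfrak{g},\mathfrak{g}]$) is a translate of the orbit of $y$, so I may assume $G$ semisimple. For $x\in\mathfrak{g}$ I would then use the Jordan decomposition $x=s+n$ and the $G$-equivariant morphism $\mathcal{O}=G\cdot x\to G\cdot s$, $g\cdot x\mapsto g\cdot s$, whose fibre over $s$ is $s+L\cdot n$ with $L=Z_G(s)^{\circ}$. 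In characteristic $0$ the centralizer $L$ is a Levi subgroup (the roots vanishing on $s$ satisfy $\Phi_s=\Phi\cap\operatorname{span}(\Phi_s)$, hence form a Levi subsystem), so $G\cdot s\cong G/L$ and $\mathcal{O}\cong G\times_{Z_G(s)}(Z_G(s)\cdot n)$ is an associated bundle whose fibre is the nilpotent orbit $L\cdot n$ in $\mathfrak{l}:=\operatorname{Lie}(L)$.

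The workhorse is the observation that a partial flag variety $G/Q$ carries an open cell $U_Q^{-}Q/Q\cong\mathbb{A}^{\dim G/Q}$, and the section $U_Q^{-}\hookrightarrow G$ trivializes any homogeneous fibration $G/H\to G/Q$ with $H\subseteq Q$ over this cell, giving $\pi^{-1}(\text{cell})\cong\mathbb{A}^{\dim G/Q}\times(Q/H)$. Applied to $Q=P\supseteq L$ a parabolic with Levi $L$, this shows $G/L$ is birational to $\mathbb{A}^{\dim G/P}\times U_P$, an affine space, hence rational. Applied to the bundle $\mathcal{O}\to G/L$ (trivializing the base torsor over its cell), it yields $\mathcal{O}\sim_{\mathrm{bir}}\mathbb{A}^{d}\times(L\cdot n)$. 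Thus the theorem reduces cleanly to: every nilpotent orbit of a connected reductive group is rational.

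For a nilpotent $n=e\neq0$ I would choose an $\mathfrak{sl}_2$-triple $(e,h,f)$ (Jacobson–Morozov) and the grading $\mathfrak{g}=\bigoplus_i\mathfrak{g}_i$ by $\operatorname{ad}h$-eigenvalues, with associated parabolic $P=P(h)$, $\mathfrak{p}=\bigoplus_{i\ge0}\mathfrak{g}_i$, reductive part $G_0=Z_G(h)$, and $U=R_u(P)$. The standard fact $Z_G(e)\subseteq P$ gives a fibration $\mathcal{O}=G/Z_G(e)\to G/P$, and the cell argument reduces rationality of $\mathcal{O}$ to that of $P/Z_G(e)$. As $Z_G(e)$ is graded, $Z_G(e)=Z_{G_0}(e)\ltimes(Z_G(e)\cap U)$, and a further fibration $P/Z_G(e)\to G_0/Z_{G_0}(e)$ has affine-space fibre $U/(Z_G(e)\cap U)$. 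Everything then comes down to rationality of $G_0\cdot e\subseteq\mathfrak{g}_2$, an orbit of the strictly smaller reductive group $G_0$ acting linearly on the graded piece $\mathfrak{g}_2$.

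The hard part is precisely this last step, because the recursion leaves the class of adjoint orbits: $G_0\cdot e$ is an orbit of a reductive group on a graded summand, not an adjoint orbit, so a naive induction on $\dim G$ would require strengthening the statement to such graded orbits (Vinberg's theory of graded Lie algebras). Alternatively, via Lusztig–Spaltenstein induction one may present an induced orbit $\operatorname{Ind}_L^G\mathcal{O}_L$ birationally as a bundle over $G/Q$ with fibre $\mathcal{O}_L\times\mathfrak{u}_Q$, reducing to the rigid nilpotent orbits in Levi subgroups; the genuine obstacle then becomes the rationality of the finitely many rigid orbits, which appears to need either the explicit classification or the direct orbit parameterization alluded to in the introduction. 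I expect the core of the argument to live exactly here.
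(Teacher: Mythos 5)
Your reduction tracks the paper's proof almost step for step: kill the center, split off the semisimple part via the Jordan decomposition and the Levi $C_G(x_s)$, use the big cell of $G/P$ to trivialize the homogeneous fibrations, and in the nilpotent case pass to the Jacobson--Morozov parabolic $P=P(h)$ using $Z_G(e)\subseteq P$. But the proposal is not a proof: you explicitly leave unproved the rationality of $G_0\cdot e\subseteq\mathfrak g_2$, and the escape route you sketch (Lusztig--Spaltenstein induction down to rigid orbits, then classification) is a genuine detour that the statement does not need. The gap is created by your own last reduction, namely fibering $P/Z_G(e)$ over $G_0/Z_{G_0}(e)$; that step is where you lose the game, because it trades a solvable problem for one outside the class of adjoint orbits.

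The missing idea is elementary $\mathfrak{sl}_2$-representation theory, not classification. Since $\operatorname{ad}(e)\colon\mathfrak g_i\to\mathfrak g_{i+2}$ is surjective for every $i\geqslant 0$, one has $[\mathfrak p,e]=\bigoplus_{i\geqslant 2}\mathfrak g_i$, so the $P$-orbit of $e$ is \emph{open} in the linear subspace $\bigoplus_{i\geqslant 2}\mathfrak g_i$ of $\operatorname{Lie}(\operatorname{Rad}_u(P))$ (equivalently, $G_0\cdot e$ is already open in $\mathfrak g_2$); this is the statement of \cite[Chap.\,III, Sect.\,4.20(i)]{SS70} that the paper invokes. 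Hence $P/C_P(e)\cong P\cdot e$ is an open subset of an affine space, and your own cell argument --- $G/C_G(e)$ birational to $\operatorname{Rad}_u(P^-)\times\bigl(P/C_P(e)\bigr)$, using $C_G(e)=C_P(e)$ --- finishes the nilpotent case outright, with no recursion and no further fibration of $P/Z_G(e)$. With that one fact inserted your argument becomes essentially the paper's: it packages the cell trivializations as ``birational complements,'' settles the semisimple case by the Levi big cell exactly as you do, and handles the mixed case by running the nilpotent argument inside the reductive group $C_G(x_s)$, again exactly as you propose.
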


The  $G$-orbits in Theorem \ref{ratadj} are exactly the varieties
$G/H$, where $H$ is the $G$-centralizer $C_G(x)$ of an element $x$ of the Lie algebra
$$\mathfrak g:={\rm Lie}(G).$$
The groups $C_G(x)$
 have been
thoroughly studied (see \cite{CM93}, \cite{Hu95}, \cite{SS70}).
Among them there are both connected and disconnected gro\-ups. Their dimensions are
not less than
$r:={\rm rk}(G)$.

Note
that Theorem \ref{ratadj} establishes a specific property of the ad\-joint representation:
by Remark\;\ref{rem} below, there exist representations of
some $G$ not all of whose orbits are rational algebraic varieties.

Theorem \ref{ratadj} is
applied
in the proof
of
the following Theorem \ref{symplhom}, which
concerns
the classification and properties of
Hamiltonian $G$-varieties.
In it,
the following notation is used:
\begin{align*}
{\mathscr H}\;&\mbox{is the set of  isomor\-phism classes of affine Hamiltonian $G$-varieties},\\[-.7mm]
{\mathscr S}\;&\mbox{is the set of $G$-orbits of nonzero semisimple elements of\;$\mathfrak g$.}
\end{align*}

\begin{theorem}\label{symplhom} If $G$ is a simply connected
complex semisimple algebraic group and $X$ is an affine Hamiltonian $G$-variety,
then the following holds.
\begin{enumerate}[\hskip 4.2mm\rm (a)]
\item\label{or}
$X$ is isomorphic to a
unique
$G$-orbit ${\mathcal O}_X\in \mathscr S$ endowed with the stan\-dard
structure of
a Hamiltonian $G$-variety \textup(see {\rm \cite[5.2]{Ko70}}\textup).

\item\label{bije} The map $X\mapsto {\mathcal O}_X$ yields a bijection
\begin{equation}\label{bij}
\mathscr{H}\to\mathscr{C}.
  \end{equation}
    \item\label{scrv} $X$ is a simply connected rational variety.
    \item\label{stab} The $G$-stabilizer of any point of $X$ is a Levi subgroup of a proper parabolic subgroup of $G$.
    \item\label{Levs} Every Levi subgroup of every proper parabolic subgroup of $G$ is
    the  $G$-stabi\-lizer of a point of
    some affine Hamiltonian $G$-variety.
\end{enumerate}
\end{theorem}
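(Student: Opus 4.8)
The plan is to prove the classification statements (a)--(b) first and then read off (c)--(e) from the resulting identification of $X$ with a nonzero semisimple adjoint orbit. Throughout I identify $\mathfrak g\cong\mathfrak g^*$ by the Killing form, so that adjoint and coadjoint orbits coincide and each orbit $\mathcal O\subset\mathfrak g$ carries the Kirillov--Kostant--Souriau symplectic form with moment map the inclusion $\mathcal O\hookrightarrow\mathfrak g$; this is precisely the standard Hamiltonian structure of \cite[5.2]{Ko70} named in (a).

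For the heart of the matter, given an affine Hamiltonian $G$-variety $X$ with moment map $\mu\colon X\to\mathfrak g$, I would use the structure theory of homogeneous symplectic $G$-spaces to see that $\mu$ realizes $X$ as a $G$-equivariant covering of a single adjoint orbit $\mathcal O=\mu(X)$, with the Kirillov--Kostant--Souriau form pulled back to the form on $X$. Since $X$ is affine and $X\to\mathcal O$ is finite, $\mathcal O$ is the image of a finite surjective morphism from an affine variety, hence affine, hence closed in $\mathfrak g$; therefore $\mathcal O=G\cdot x$ with $x$ semisimple (a $G$-orbit in $\mathfrak g$ is closed precisely when its elements are semisimple). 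As $X$ has positive dimension, $x\neq0$, so $\mathcal O\in\mathscr S$. Finally, because $G$ is simply connected and $x$ is semisimple, $C_G(x)$ is connected by Steinberg's theorem \cite{SS70}; from the fibration $C_G(x)\to G\to\mathcal O$ together with $\pi_1(G)=\pi_0(C_G(x))=1$ one obtains $\pi_1(\mathcal O)=1$, so the connected covering $X\to\mathcal O$ must be an isomorphism. This gives (a), the uniqueness of $\mathcal O_X$ being clear since $\mu(X)$ is canonically attached to $X$ ($\mu$ itself is unique as $G$ is semisimple).

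Part (b) then follows formally: $X\mapsto\mathcal O_X$ is well defined on $\mathscr H$, its inverse sends a nonzero semisimple orbit to itself equipped with its Kirillov--Kostant--Souriau structure, and the two assignments are mutually inverse, so the map \eqref{bij} is a bijection. For (c), rationality of $X\cong\mathcal O_x$ is immediate from Theorem \ref{ratadj}, and simple-connectedness is the identity $\pi_1(\mathcal O)=1$ already established. For (d), every point stabilizer is conjugate to $C_G(x)$, which for semisimple $x$ is a Levi subgroup of a parabolic subgroup, proper because $x\neq0$ forces $C_G(x)\neq G$. For (e), given a Levi subgroup $L$ of a proper parabolic I would choose $x$ generic in the center of ${\rm Lie}(L)$: then $x$ is semisimple and nonzero with $C_G(x)=L$, so $\mathcal O_x$ is an affine Hamiltonian $G$-variety having $L$ as a point stabilizer.

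The main obstacle is the reduction carried out in the second paragraph: rendering the Kirillov--Kostant--Souriau identification precise in the algebraic category, and, above all, using affineness to force $X$ onto a single closed (hence semisimple) orbit while using simple-connectedness to rule out any nontrivial covering. If transitivity of the $G$-action is not already built into the definition of a Hamiltonian $G$-variety, then the prior step --- showing that an affine Hamiltonian $G$-variety is homogeneous --- becomes the decisive point, since everything after the identification $X\cong\mathcal O_x$ is routine structure theory of centralizers of semisimple elements.
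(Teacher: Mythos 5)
Your proposal is correct and follows the same overall route as the paper: invoke Kostant's results to obtain a canonical $G$-equivariant covering $\tau_X\colon X\to\mathcal O_X$ onto a (co)adjoint orbit, use affineness of $X$ to force $\mathcal O_X$ to be a nonzero semisimple orbit, use simple connectedness of $G$ (through connectedness of the centralizer of a semisimple element, equivalently the fact that it is a Levi subgroup) to get $\pi_1(\mathcal O_X)=1$ and hence that the covering is an isomorphism, and then read off (b)--(e) from Theorem \ref{ratadj} and Theorem \ref{red-stab}. The one step where you genuinely diverge is the passage from ``$X$ affine'' to ``$\mathcal O_X$ semisimple'': the paper applies Matsushima's criterion to the homogeneous affine variety $X$ to conclude that $G_x$, hence $G_{\tau_X(x)}$, is reductive, and then uses the equivalences of Theorem \ref{red-stab}; you instead observe that $\tau_X$ is a finite surjective morphism (its fiber is $G_{\tau_X(x)}/G_x$, finite because the identity components of the two stabilizers agree), so that $\mathcal O_X$ is affine by descent of affineness along finite surjections, hence closed, hence semisimple. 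Both arguments are sound and of comparable weight; the paper's reuses Theorem \ref{red-stab} wholesale, while yours trades Matsushima for the (easily justified) finiteness of the covering. Your closing caveat about homogeneity does not open a gap: in Kostant's framework the Hamiltonian $G$-spaces in question are homogeneous by definition, and the paper's own proof uses transitivity of the action explicitly at the Matsushima step.
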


Recall  that for the adjoint action of $G$ on $\mathfrak g$,
the catego\-ri\-cal quotient $\mathfrak g/\!\!/G$  is isomorphic to the affine space ${\mathbb A}^r$, and every fiber of the
quotient morphism
$G\to \mathfrak g/\!\!/G$
contains a unique
$G$-orbit
from $\mathscr S$ (see \cite{Ko63}, \cite[8.5]{PV94}).  Combined with the existence of bijection \eqref{bij}, this yields
a paramitrization of $\mathscr H$ by
${\mathbb A}^r\setminus \{0\}$.

The proofs of Theorems \ref{ratadj} and \ref{symplhom}
are given in Section 3.

\vskip 2mm

\noindent {\it Conventions and notation.} Our basic reference for algebraic groups and algebraic geometry is \cite{B91}
and
we follow the conventions
therein.
Unless otherwise stated, all algebraic groups and algebraic va\-rieties are taken over an algebraically closed
field $k$ whose characteristic is
not a bad prime for reductive $G$ (see \cite[Chap.\,I, Def.\,4.1]{SS70}).

\vskip 1.5mm
We use the following notation:

\vskip 1mm

$C_G(M)$ is the $G$-centralizer of a subset $M$ of $\mathfrak g$ or $G$.

${\rm Rad}_u(Q)$ is the unipotent radical of an affine algebraic group $Q$.

$\mathbb A^n$ is the $n$-dimensional affine space.

\section{\bf 2. Birational complements}

\begin{definition}\label{sect}  Let $G$ and $H$ be as in Section $1$.
A sequence $S_1,\ldots, S_m$ of locally closed subsets of $G$
is called a
{\it birational complement to $H$ in $G$} if the morphism
\begin{equation}\label{bc}
\lambda\colon S_1\times\cdots \times S_m\times H\to G,\quad (s_1,\ldots, s_m, h)\mapsto s_1\cdots s_mh
\end{equation}
is an open embedding.
\end{definition}

\begin{remark}
One can show that this notion is order-sensitive, i.e.,  re\-shaf\-fling the terms of a sequence that is a birational comple\-ment to $H$ in $G$,
one obtains
a sequence that, generally speaking,  is not a birational complement to $H$ in $G$.
\end{remark}

If $S_1,\ldots, S_m$ is a birational comp\-le\-ment to $H$ in $G$, then
Defini\-tion\;\ref{sect} implies, in view of the connectedness of $G$, that
$H$ is connected as well. It also implies that  open embedding \eqref{bc} is $H$-equivariant with respect to the action of $H$ on $G$ by right translations and on $S_1\times\cdots\times S_m\times H$ by right translations of the last factor. Thus, the image of open embedding \eqref{bc} is an $H$-invariant open subset of $G$ that is  $H$-equivariantly isomorphic to
$S_1\times\cdots \times S_m\times H$.

\begin{example}\label{ex1} Let $G$ be a semidirect product $A\ltimes B$ of closed subgroups $A$ and $B$.
Then the one-term sequence $A$ (resp. $B$) is a birational complement  to $B$ (resp. $A$) in $G$.
For instance, one can take $A$ and $B$ to be respectively a Levi subgroup
of $G$ and the unipotent radical ${\rm Rad}_u (G)$.
\end{example}

\begin{example}\label{ex2}
 Let $G$ be a reductive algebraic group and let $P$ be a parabolic subgroup of $G$.
 Let
 $P^-$ be the parabolic subgroup of $G$ opposite to $P$. Then the one-term sequence ${\rm Rad}_u(P^-)$ is a birational complement in $G$ to $P$ (see \cite[Prop.\;14.21(iii)]{B91}).
\end{example}

\eject

\begin{lemma}\label{propcomp} Let $G$ and $H$ as in Section $1$ and let $Q$ be a closed subgroup of $H$.
Let $S_1,\ldots , S_m$ be a birational complement to $H$ in $G$.
\begin{enumerate}[\hskip 4.2mm\rm(a)]
\item\label{merge} If
$Z_1,\ldots, Z_n$ is a birational complement to $Q$ in $H$, then
$$S_1,\ldots , S_m, Z_1,\ldots, Z_n$$ is a birational comp\-lement to $Q$ in $G$.
\item\label{osu3} The variety $G/Q$ contains an open subset isomorphic to
\begin{equation}\label{SHQ}
S_1\times\cdots\times S_m\times (H/Q).
\end{equation}
\end{enumerate}
\end{lemma}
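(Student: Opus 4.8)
\emph{Sketch.} The plan is to reduce both parts to the defining property of birational complements (Definition \ref{sect}) together with standard facts about geometric quotients of an algebraic group by a closed subgroup.

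For part \eqref{merge}, I would write
\[
\lambda_G\colon S_1\times\cdots\times S_m\times H\to G,\qquad
\lambda_H\colon Z_1\times\cdots\times Z_n\times Q\to H
\]
for the morphisms of the form \eqref{bc} attached to the two given birational complements; by hypothesis both are open embeddings, with images $U\subseteq G$ and $V\subseteq H$. The morphism $\mu$ attached to the concatenated sequence sends $(s_1,\dots,s_m,z_1,\dots,z_n,q)$ to $s_1\cdots s_m\, z_1\cdots z_n\, q$, and since $z_1\cdots z_n q=\lambda_H(z_1,\dots,z_n,q)$ lies in $H$, associativity gives the factorization $\mu=\lambda_G\circ(\mathrm{id}\times\lambda_H)$, where $\mathrm{id}$ is the identity of $S_1\times\cdots\times S_m$. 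Now $\mathrm{id}\times\lambda_H$ is an open embedding onto $S_1\times\cdots\times S_m\times V$, which is open in $S_1\times\cdots\times S_m\times H$, and the restriction of the open embedding $\lambda_G$ to this open subset is again an open embedding. Thus $\mu$ is a composition of open embeddings, hence itself an open embedding, which is precisely the assertion.

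For part \eqref{osu3}, I would first recall (as noted after Definition \ref{sect}) that $\lambda_G$ is equivariant for the right translation action of $H$, so its image $U$ is an $H$-stable, and hence $Q$-stable, open subset of $G$. Let $\pi\colon G\to G/Q$ be the quotient morphism, which exists because $Q$ is closed. Since $U$ is open and $Q$-saturated, $\pi(U)$ is open in $G/Q$ and $\pi$ restricts to a geometric quotient $U\to U/Q=\pi(U)$. The $Q$-equivariant isomorphism $\lambda_G$ from $S_1\times\cdots\times S_m\times H$ onto $U$, in which $Q$ acts trivially on each $S_i$ and by right translation on $H$, then descends to an isomorphism $\bigl(S_1\times\cdots\times S_m\times H\bigr)/Q\cong U/Q$. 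Finally, because $H\to H/Q$ is a principal $Q$-bundle, passing to the quotient commutes with the trivial-action factor $S_1\times\cdots\times S_m$ (it is a base change of $H\to H/Q$), so $\bigl(S_1\times\cdots\times S_m\times H\bigr)/Q\cong S_1\times\cdots\times S_m\times(H/Q)$. Combining the two isomorphisms identifies the open subset $\pi(U)$ of $G/Q$ with \eqref{SHQ}.

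The step I expect to be the main obstacle is the quotient bookkeeping in part \eqref{osu3}: the openness of $\pi(U)$ via saturation, the descent of the equivariant isomorphism to geometric quotients, and the identity $(Y\times H)/Q\cong Y\times(H/Q)$ for a factor $Y$ carrying the trivial action. Each of these rests on the existence of $G/Q$ as a variety and on $G\to G/Q$ being a principal $Q$-bundle, which I would quote from the standard theory of homogeneous spaces (see \cite{B91}). Part \eqref{merge}, by contrast, is essentially formal once the factorization $\mu=\lambda_G\circ(\mathrm{id}\times\lambda_H)$ is spotted; the only point to watch is that the $S_i$ and $Z_j$ enter merely as abstract factors, mapped into $G$ through the multiplication morphism.
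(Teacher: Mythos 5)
Your proposal is correct and follows essentially the same route as the paper: part (a) via the factorization of the concatenated multiplication map as a composition of the two given open embeddings, and part (b) by passing the $Q$-saturated open image $U\cong S_1\times\cdots\times S_m\times H$ to the geometric quotient by $Q$ and using that the quotient splits off the trivial-action factors (the paper simply cites \cite[II, Thm.\,6.8 and Cor.\,6.6]{B91} for the quotient bookkeeping you spell out).
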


\begin{proof}
\eqref{merge} Let $X:=S_1\times\cdots\times S_m$ and $Y:=Z_1\times\cdots\times Z_n$. By Definition\,\ref{sect},
\begin{equation*}
\mu\colon Y\times Q=Z_1\times\cdots\times Z_n\times Q\to H,\quad (z_1,\ldots, z_n, q)\mapsto z_1\cdots z_nq
\end{equation*}
is open embedding, therefore,
$\nu:=\colon {\rm id}_X\times \mu\colon X\times (Y\times Q)\to X\times H$
is an open embedding. Since $\lambda$ (see  \eqref{bc}) is also an open embedding,
this implies that the morphism
\begin{gather*}
\lambda\circ\nu\colon X\times (Y\times Q)=S_1\times\cdots \times S_m\times Z_1\times \cdots\times Z_n\to G,\\[-.5mm]
(s_1,\ldots, s_m, z_1,\ldots, z_n, q)\mapsto s_1\cdots s_m z_1\cdots z_n q
\end{gather*}
is an open embedding as well. This proves \eqref{merge}.

\eqref{osu3}  As noted above, $G$ contains an $H$-invariant open subset $U$ that is $H$-equivariantly isomorphic to $S_1\times\cdots \times S_m\times H$. Therefore, by \cite[II, Thm.\,6.8 and Cor.\,6.6]{B91}, a geometric quotient $U/Q$ exists and is isomorphic to variety \eqref{SHQ}. On the other hand, $U/Q$ is isomorphic to an open subset of $G/Q$ because the canonical morphism $G\to G/Q$ is open (see \cite[II, 6.1]{B91}).
This proves \eqref{osu3}.
\end{proof}

\begin{example}\label{ex4} Taking $H=Q$ in Lemma \ref{propcomp}(b) yields that $G/H$ contains an $H$-invariant open subset isomorphic to $S_1\times\cdots \times S_m$. For instance, $G/P$ in Example \ref{ex2} contains an open subset isomorphic to the under\-lying variety of ${\rm Rad}_u (P^-)$, i.e.,
to an affine space (see \cite[IV, 14.4, Rem.]{B91}); whence, $G/P$ is rational.
\end{example}

\begin{example}\label{ex3}
Let $G$ be a reductive group, let $P$ be its a parabolic subgroup and
let $L$ be a Levi subgroup of $P$. Then by Lemma \ref{propcomp} and Examples \ref{ex1}, \ref{ex2},
the two-term sequence ${\rm Rad}_u(P^-)$, ${\rm Rad}_u(P)$ is a birational complement to $L$ in $G$ and $G/L$ contains an open subset isomorphic to the underlying variety of ${\rm Rad}_u(P^-)\times {\rm Rad}_u(P)$, i.e., to an affine space; whence $G/L$ is a
rational variety.
\end{example}

\begin{example}\label{ex5} Let $B$ be a Borel subgroup of $G$ and let
$Q$ be a closed subgroup of $B$.
In view of  Lemma \ref{propcomp} and Example \ref{ex2}, the variety $G/Q$ contains an open subset isomorphic to $\mathbb A^d\times (B/Q)$. By \cite[Thm.\,5]{Ro62},
the variety
$B/Q$ is isomorphic to $\mathbb A^s\times (\mathbb A^1\setminus \{0\})^t$ for some $s, t$. Whence
$G/Q$ is a rational variety
(this
statement
is
Theorem 2.9 of \cite{CZ15}).
Since every
connected solvable subgroup
of $G$ lies in a Borel subgroup of $G$, this implies that the variety $G/H$ is rational
if $H$ is connected solvable.
\end{example}

\section{\bf 3. Proofs of Theorems \ref{ratadj} and \ref{symplhom}}\label{secti}

 In the proof of Theorems \ref{ratadj} and \ref{symplhom},  we shall use the following

\begin{theorem}\label{red-stab} \

$\cc$ Let $G$ be a connected reductive algebraic group and let $x$ be an element of
$\mathfrak g$.
The following pro\-per\-ties
are equivalent:
\begin{enumerate}[\hskip 4.2mm\rm(a)]
\item\label{ss} $x$ is semisimple;
\item\label{af} the $G$-orbit $\mathcal O$ of $x$ is an affine variety;
\item\label{re}
$C_G(x)$ is reductive;
\item\label{cl}
$\mathcal O$
is a closed subset of $\mathfrak g$.
\end{enumerate}

$\cc$ Let $G$ be a simply connected semisimple algebraic group. Then
pro\-per\-ties
{\rm \eqref{ss}--\eqref{cl}} are equivalent to
the property
\begin{enumerate}[\hskip 4.2mm {\rm(e)}]
\item \label{ls}
$C_G(x)$ is a Levi subgroup of a parabolic subgroup of $G$.
\end{enumerate}
For any parabolic subgroup $P$ of $G$ and any Levi subgroup $L$  of $P$,
there is a semisimple element $s\in \mathfrak g$ such that $C_G(s)=L$.
\end{theorem}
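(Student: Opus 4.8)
The plan is to handle the two bullets separately. For the first I organize everything around the Jordan decomposition $x=s+n$ in $\mathfrak g$ (so $s$ is semisimple, $n$ nilpotent, $[s,n]=0$) together with the identity $C_G(x)=C_G(s)\cap C_G(n)$, and I prove the four conditions equivalent via the cycle (a)$\Rightarrow$(d)$\Rightarrow$(b)$\Rightarrow$(c)$\Rightarrow$(a). A preliminary observation drives the closedness arrows: if $n\neq0$ then $s\in\overline{G\cdot x}\setminus G\cdot x$, so $\mathcal O$ is not closed; equivalently, a closed orbit is semisimple. Here $s\in\overline{G\cdot x}$ is obtained by embedding $n$ in an $\mathfrak{sl}_2$-triple inside the reductive algebra $\mathfrak c_{\mathfrak g}(s):=\mathrm{Lie}(C_G(s))$ (Jacobson--Morozov, valid in good characteristic), which yields a cocharacter $\lambda$ centralizing $s$ with $\lim_{t\to0}\mathrm{Ad}(\lambda(t))\,x=s$, while $s\notin G\cdot x$ because conjugation preserves the Jordan decomposition. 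Then (a)$\Rightarrow$(d): every fibre of the adjoint quotient $\mathfrak g\to\mathfrak g/\!\!/G$ contains a unique closed orbit (\cite[8.5]{PV94}, \cite{Ko63}), which by the observation is semisimple; since semisimple elements of one fibre are $G$-conjugate, it must be $G\cdot s$, which is therefore closed. Finally (d)$\Rightarrow$(b) is trivial, a closed subset of the affine space $\mathfrak g$ being affine.

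It remains to treat (b)$\Rightarrow$(c)$\Rightarrow$(a). For (b)$\Rightarrow$(c): in good characteristic the orbit map is separable, so $\mathcal O\cong G/C_G(x)$, and an affine homogeneous space of a reductive group has reductive stabilizer by Matsushima's criterion (\cite{PV94}). For (c)$\Rightarrow$(a), argued contrapositively, if $n\neq0$ then $n$ is a nonzero nilpotent of the reductive algebra $\mathfrak c_{\mathfrak g}(s)$, so $C_G(x)=C_{C_G(s)}(n)$ has nontrivial unipotent radical (the positive-weight spaces of the cocharacter above) and is not reductive (\cite{CM93}, \cite{Hu95}). This closes the cycle and settles the first bullet.

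For the second bullet $G$ is simply connected semisimple. Here (e)$\Rightarrow$(c) is immediate, a Levi subgroup being reductive, so (e) implies (a)--(d). The substantial direction is (a)$\Rightarrow$(e). Conjugating, I take $s\in\mathfrak t=\mathrm{Lie}(T)$; then $\mathrm{ad}\,s$ is semisimple with eigenvalue $\langle\alpha,s\rangle$ on $\mathfrak g_\alpha$, so $\mathfrak c_{\mathfrak g}(s)=\mathfrak t\oplus\bigoplus_{\langle\alpha,s\rangle=0}\mathfrak g_\alpha$, and $C_G(s)$ is the connected (Steinberg's theorem, where simple-connectedness is used; \cite{SS70}, \cite{Hu95}) reductive group with maximal torus $T$ and root system $\Psi=\{\alpha:\langle\alpha,s\rangle=0\}$. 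The crux is that $\Psi$ is a \emph{Levi} subsystem: over $\mathbb C$ one writes $\langle\alpha,s\rangle=0$ as $\langle\alpha,\mathrm{Re}\,s\rangle=\langle\alpha,\mathrm{Im}\,s\rangle=0$ and notes that for generic real $t$ the single linear form $\langle\alpha,\mathrm{Re}\,s+t\,\mathrm{Im}\,s\rangle$ cuts out exactly $\Psi$, so $\Psi=\{\alpha:\langle\alpha,v\rangle=0\}$ for a real $v$, which is the root system of the Levi of the parabolic $\{\alpha:\langle\alpha,v\rangle\geq0\}$; in positive good characteristic I would instead invoke the structural fact that connected centralizers of semisimple elements of $\mathfrak g$ are Levi subgroups (\cite{CM93}, \cite{Hu95}). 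This identifies $C_G(s)$ with a Levi subgroup $L$, giving (e).

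For the final assertion I realize a prescribed Levi $L$ of a parabolic $P$ as a centralizer directly. Writing $L=C_G(S)$ with $S=Z(L)^\circ$ its central torus, so that $\Phi_L=\{\alpha:\alpha|_S=1\}$, I take $s$ to be a generic element of $\mathrm{Lie}(S)$: every root of $L$ vanishes on $\mathrm{Lie}(S)$, while for $\alpha\notin\Phi_L$ the form $\langle\alpha,\,\cdot\,\rangle|_{\mathrm{Lie}(S)}$ is nonzero (this is exactly where good characteristic enters, ensuring $\alpha|_S\notin p\,X^*(S)$), so a generic $s$ avoids the finitely many hyperplanes $\{\langle\alpha,\,\cdot\,\rangle=0\}$ and satisfies $\{\alpha:\langle\alpha,s\rangle=0\}=\Phi_L$ exactly; then $C_G(s)^\circ$ has maximal torus $T$ and root system $\Phi_L$, hence equals $L$, and Steinberg's connectedness theorem gives $C_G(s)=L$. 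I expect the main obstacle to be precisely this Levi-subsystem point in arbitrary good characteristic: over $\mathbb C$ it is transparent from the real/imaginary-part device, but in positive characteristic it rests on the finer structure theory of centralizers and on the simple-connectedness hypothesis (via Steinberg's theorem) to pass from $C_G(s)^\circ$ to $C_G(s)$.
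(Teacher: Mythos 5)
Your argument reaches the paper's conclusions by a genuinely different route in two places, and it is essentially sound in characteristic zero. For the first bullet the paper does not close a cycle: it quotes (a)$\Leftrightarrow$(d) from \cite{Ko63}, \cite{PV94}, gets (b)$\Leftrightarrow$(c) from Matsushima's criterion as you do, notes (d)$\Rightarrow$(b), and proves the one remaining implication (b)$\Rightarrow$(d) by playing Goodman's theorem (the complement of an affine open subset of $\overline{\mathcal O}$ has pure codimension one, \cite{Go69}) against Kostant's bound $\mathrm{codim}_{\overline{\mathcal O}}(\overline{\mathcal O}\setminus\mathcal O)\geqslant 2$. Your cycle (a)$\Rightarrow$(d)$\Rightarrow$(b)$\Rightarrow$(c)$\Rightarrow$(a), built on the Jordan decomposition, the cocharacter limit $\lim_{t\to 0}\mathrm{Ad}(\lambda(t))x=x_s$, and the uniqueness of the closed orbit in a fibre of $\mathfrak g\to\mathfrak g/\!\!/G$, avoids the Goodman--Kostant codimension argument entirely and is more self-contained; its hidden inputs (reductivity of $C_G(x_s)^\circ$ before the equivalence is established, and conjugacy of semisimple elements within one fibre via Chevalley restriction) are standard and non-circular. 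For (a)$\Rightarrow$(e) the paper is shorter: it invokes \cite[Cor.\,3.8]{S75} to write $C_G(x)=C_G(\mathfrak s)$ for $\mathfrak s$ the Lie algebra of a torus $S$, then $C_G(\mathfrak s)=C_G(S)$ and the fact that centralizers of tori are Levi subgroups \cite[Prop.\,3.4.7]{DM20}; your explicit identification of $\Psi=\{\alpha:\langle\alpha,s\rangle=0\}$ as a Levi subsystem via the real/imaginary-part perturbation is a clean complex-analytic substitute.

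The one genuine soft spot is positive characteristic, which the paper allows (any non-bad prime). Your fallback citations for the Levi-subsystem step do not cover the case you need: \cite{CM93} is written over $\mathbb C$, and the precise fact required --- that for simply connected $G$ in good characteristic the centralizer of a semisimple element of $\mathfrak g$ is connected and is the centralizer of a torus --- is exactly the content of the paper's reference \cite{S75}, which you should cite instead. Likewise, in the last assertion your parenthetical claim that good characteristic forces $\alpha|_S\notin p\,X^*(S)$ for $\alpha\notin\Phi_L$ is unjustified and in fact fails: for $G=\mathrm{SL}_2$ in characteristic $2$ (a good prime for $A_1$) one has $d\alpha=0$ on $\mathrm{Lie}(T)$, so no element of $\mathrm{Lie}(S)$ separates $\Phi_L$ from its complement. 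This difficulty is also present, less visibly, in the paper's own proof (which relies on the characteristic-zero reference \cite{TY05} for $C_G(S)=C_G(\mathrm{Lie}(S))$), so it is not a defect peculiar to your approach; but you should not assert that ``good characteristic'' alone disposes of it. Over $\mathbb C$, or for very good primes, your proof is complete and correct.
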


\begin{proof}
 \

\eqref{ss}$\Leftrightarrow$\eqref{cl} is well-known (see \cite[Rem.\,11]{Ko63}, \cite[8.5]{PV94}).

\eqref{cl}$\Rightarrow$\eqref{af} is clear.

\eqref{af}$\Leftrightarrow$\eqref{re} follows from Matsushima's criterion  (see \cite[Thm.\,4.17]{PV94}, \cite{Ri75}).

\eqref{af}$\Rightarrow$\eqref{cl} Let  $\mathcal O$ be affine and
let $\overline{\mathcal O}$ be the closure of $\mathcal O$ in $\mathfrak g$. The set $X:=\overline{\mathcal O}\setminus \mathcal O$ is closed in $\mathfrak g$  (see \cite[1.3]{PV94}). Arguing on the contrary, assume that
$X\neq \varnothing.$ Since $\mathcal O$ is affine, this yields
${\rm codim}_{\overline{\mathcal O}}(X)=1$
(see \cite{Go69}). On the other hand, ${\rm codim}_{\overline{\mathcal O}}(X)\geqslant 2$ by \cite[Cor.\,1 of Thm.\,3]{Ko63}.
This contradiction shows that
$\overline{\mathcal O}= \mathcal O$.

\eqref{ss}$\Rightarrow$(e)
Let $x$ be a semisimple element. By
\cite[Cor.\,3.8]{S75},
there is a torus $S$ in $G$
such that $x\in \mathfrak s:={\rm Lie}(G)$ and
\begin{equation}\label{ZzZ}
C_G(x)=C_G(\mathfrak s).
\end{equation}
By \cite[Prop.\,3.4.7]{DM20}, there is a parabolic subgroup
$P$
of $G$ such that
$C_G(S)$ is a Levi subgroups of $P$. Since
\begin{equation}\label{Ss}
C_G(S)=C_G(\mathfrak s)
\end{equation}
(see \cite[Thm. 24.4.8(ii)]{TY05}), from
\eqref{ZzZ} and \eqref{Ss} we infer that
$C_G(x)$ is a Levi subgroups of $P$.

(e)$\Rightarrow$\eqref{re} is clear.

To prove the last statement of Theorem \ref{red-stab}, note that since $L$ is a Levi subgroup of a parabolic subgroup of $G$,
there is a torus $S$ in $G$ such that
\begin{equation}\label{L}
L=C_G(S)
\end{equation}
(see \cite[Prop.\,3.4.6]{DM20}). We identify $S$ with  $(k^*)^d$ by means an isomor\-phism between them.
Let $\mathfrak s=k^d
$ be the Lie algebra of $S$. If  $z=(z_i)\in \mathfrak s$, let $R_z:=\{(m_i)\in\mathbb Z^d\mid \sum_i m_iz_i=0\}$. Then
the minimal algebraic sub\-al\-gebra $\mathfrak a(z)$ of  $\mathfrak s$ containing $z$
is $\{(s_i)\in \mathfrak s\mid \sum_im_is_i=0\;\;\mbox{for all $(m_i)\in R_z$}\}$
(see \cite[II,\,7.3(2)]{B91}).
Since the degree of $k$ over its  prime subfield in infinite,
this implies the existence of $x\in \mathfrak s$ such that $\mathfrak s=\mathfrak a(x)$.
By \cite[Cor.\,3.8]{S75}, we then have $C_G(x)=C(\mathfrak s)$.
In view of \eqref{Ss}\;and\;\eqref{L},
this yields $L=C_G(x)$.
\end{proof}

\begin{proof}[Proof of  Theorems {\rm \ref{ratadj}}] Since the center of $G$ lies in the kernel of the adjoint representation
of $G$,
without changing the $G$-orbits
in $\mathfrak g$, we may (and shall)
assume that $G$ is a simply connected semisimple group.

Let $x$ be an element of $\mathfrak g$.
Our goal is to prove that
$G/C_G(x)$ is a rational variety. We may (and shall) assume that $x\neq 0$. We shall consider separately three cases:
\begin{enumerate}[\hskip 3.3mm\rm (i)]
\item
$x$ is nilpotent,
\item$x$ is semisimple,
\item
$x$ is neither nilpotent, nor semisimple.
\end{enumerate}

{\it Case {\rm(i)}.} Let $x$ be nilpotent.
Then by the Jacobson--Mo\-ro\-zov theo\-rem, there are
elements $h, y\in \mathfrak g$ such that $\{x, h, y\}$ is an ${\mathfrak s}{\mathfrak l}_2$-triple, i.e.,
$[h, x]=2x$, $[h, y]=-2y$, $[x, y]=h$. For every $i\in \mathbb Z$,\;put
\begin{equation*}
{\mathfrak g}(i):=\{z\in \mathfrak g\mid [h, z]=iz\}.
\end{equation*}
Then we have the decomposition $\mathfrak g=\bigoplus_{i\in\mathbb Z}{\mathfrak g}(i)$, which is a structure of a $\mathbb Z$-graded Lie algebra on $\mathfrak g$.

The subspace
$\mathfrak p:=\textstyle\bigoplus_{i\geqslant 0}\mathfrak g(i)$
is a parabolic subalgebra of $\mathfrak g$.
Let $P$ be the parabolic subgroup of $G$
such that
${\rm Lie}(P)=\mathfrak p$. Then
the $P$-stable subspace
$\mathfrak u:=\textstyle\bigoplus_{i> 0}\mathfrak g(i)$
is ${\rm Lie}({\rm Rad}_u (P))$.

We have $x\in {\mathfrak g}(2)\subseteq \mathfrak u$. By \cite[
Chap.\,III, Sect.\,4.20(i)]{SS70},
the $P$-orbit of $x$ is open in $\mathfrak u$, therefore
\begin{equation}\label{ratio}
\mbox{$P/C_P(x)$ is isomorphic to an open subset of an affine space.}
\end{equation}
By \cite[Chap.\,III, Sect.\,4.16]{SS70}, we have
$C_G(x)\subset P$; whence
\begin{equation}\label{=}
C_P(x)=C_G(x).
\end{equation}
In view of \eqref{=}, we have the following tower of algebraic groups:
\begin{equation}\label{tower}
G\supset P\supset C_G(x),
\end{equation}
By Example 2 and Lemma \ref{propcomp}(b)  applied to \eqref{tower}, we infer that
$G/C_G(x)$ contains an open subset isomorphic to
 $({\rm Rad}_u (P^{-}))\times (P/ C_G(x))$. Since the underlying variety of ${\rm Rad}_u (P^{-})$ is isomorphic to an affine space,
  we infer from \eqref{ratio} that $G/C_G(x)$ contains an open subset isomorphic to an open set of an affine space.
 Therefore, $G/C_G(x)$ is a rational variety.

\vskip 1mm

{\it Case {\rm (ii)}}. Let $x$ be semisimple. Then $C_G(x)$ is a Levi subgroup of a parabolic subgroup of $G$ in view of Theorem \ref{red-stab}.
Hence the variety $G/C_G(x)$ is rational by Example \ref{ex3}.

\vskip 1mm

{\it Case {\rm(iii)}.} Let $x$ be neither nilpotent, nor semisimple. Let $x=x_s+x_n$ be the Jordan decomposition of $x$.
By Theorem \ref{red-stab}, the group
$C_G(x_s)$ is a
Levi subgroup of a parabolic subgroup of $G$.
We have
$x_n\in {\rm Lie}(C_G(x_s))$ and it follows from the uniqueness of the Jordan decomposition that
\begin{equation}\label{zJ}
C_{G}(x)=C_{C_G(x_s)}(x_n).
\end{equation}
By Example \ref{ex3}, there is a two-term birational complement $S_1, S_2$ to $C_G(x_s)$ in $G$ such that
\begin{equation}\label{aass}
\mbox{$S_i$ is isomorphic to an affine space for every $i$.}
\end{equation}
Applying  Lemma \ref{propcomp} to $H=C_G(x_s)$, $Q=C_{C_G(x_s)}(x_n)$, we obtain from \eqref{zJ} that $G/C_{G}(x)$ contains an open set isomorphic
to $$S_1\times S_2\times (C_G(x_s)/C_{C_G(x_s)}(x_n)).$$ By (i), the variety $C_G(x_s)/C_{C_G(x_s)}(x_n)$
is rational. This and \eqref{aass} imply that  $G/C_{G}(x)$ is rational.
\end{proof}

\begin{remark} \label{rem}  Theorem \ref{ratadj} establishes a specific property of the ad\-joint representation: for some connected reductive groups $G$, there are finite-dimensional algebraic repre\-sentations not all of whose $G$-orbits are rational algebraic varie\-ties.

Indeed, in \cite[p.\,298]{Po11},
 \cite [Thm.\;2]{Po13}, \cite[Rem.\;1.5.9]{Po94} are con\-st\-ructed
connected reductive algeb\-raic groups $G$ with
 a finite subgroup $H$ such that the algebraic variety $G/H$ is nonrational.

 Being finite,
 $H$ is reductive; hence, by Matsu\-shima's cri\-terion,
the variety $G/H$ is affine. Therefore, by the embed\-ding theorem (see \cite[Thm.\;1.5]{PV94}), there exists a
$G$-equivariant (with respect to the natural action of $G$ on $G/H$) closed embedding of $G/H$ into some finite-dimensional algebraic $G$-module.
\end{remark}

\begin{proof}[Proof of  Theorems {\rm \ref{symplhom}}]\

 \eqref{or} By
\cite[Thm.\,5.4.1, Prop.\,5.1.1]{Ko70}, there are a uni\-que $G$-orbit $\mathcal O_X\subset  \mathfrak g$ and a unique morphism $\tau_X\colon X\to \mathcal O_X$ of Hamilto\-nian $G$-varieties. By \cite[Prop.\,5.1.1]{Ko70}, $\tau_X$ is a covering, and
for every $x\in X$, the identity component of the $G$-stabilizer $G_x$ of $x$ coincides with that of the $G$-stabilizer $G_{\tau_X(x)}$ of
$\tau_X(x)$.
Since $G$ acts on $X$ transitively and $X$ is affine,
$G_x$ is reductive by Matsushima's criterion. Therefore,  $G_{\tau_X(x)}$ is reductive as well. Then from the equivalence of  properties \eqref{ss}, \eqref{re}, and (e)
in Theorem \ref{red-stab}
we infer that
$\mathcal O_X\in \mathscr S$ and  $G_{\tau_X(x)}$
is connected. Since $G$ is simply connected, the latter implies that $\mathcal O_X$ is simply connected  as well. This, in turn,
implies that  $\tau_X$ is an isomorphism since $\tau_X$ is a covering.

\eqref{bije}
 In view of \eqref{or}, this follows from the fact that each $G$-orbit in $\mathfrak g$
is endowed with the standard structure of a Hamiltonian $G$-variety.

\eqref{scrv} This follows from \eqref{or}, since, as was proved above, ${\mathcal O}_X$  is a simply connected and, by Theorem \ref{ratadj}, rational variety.

\eqref{stab} This follows from \eqref{or} and the equivalence of \eqref{ss}
and (e)
in Theo\-rem \ref{red-stab}.

\eqref{Levs} This follows from the last statement of Theorem \ref{red-stab}.
\end{proof}

\end{document}